\providecommand{\norm}[1]{\ensuremath{\left\lVert#1\right\rVert }}
\providecommand{\mnorm}[1]{\ensuremath{\left\lvert#1\right\rvert}}
\def\R{\mathbb{R}}
\def\G{\mathbb{G}}
\def\E{\mathcal{E}}
\def\N{\mathcal{N}}
\def\L{\tilde{\mathcal{L}}}
\def\m{i \in \{1,2,\ldots,m\}}
\newtheorem{theorem}{Theorem}
\newtheorem{lemma}{Lemma}
\newtheorem{remark}{Remark}
\newtheorem{assumption}{Assumption}
\newtheorem{definition}{Definition}
\DeclareRobustCommand{\bigO}{%
  \text{\usefont{OMS}{cmsy}{m}{n}O}%
}
\newcommand{\bx}{{\mathbf x}}
\newcommand{\bX}{{\mathbf X}}
\newcommand{\bv}{{\mathbf v}}
\newcommand{\bz}{{\mathbf z}}
\newcommand{\bZ}{{\mathbf Z}}
\newcommand{\bxs}{{\mathbf{x_*}}}
\newcommand{\bXs}{{\mathbf{X_*}}}
\newcommand{\by}{{\mathbf y}}
\def\@linkcolor{blue}
  \def\@anchorcolor{red}
  \def\@citecolor{red}
  \def\@filecolor{red}
  \def\@urlcolor{red}
  \def\@menucolor{red}
  \def\@pagecolor{red}
  \edef\x{%
    \edef\noexpand\x{%
      \endgroup
      \noexpand\toks@{%
        \catcode 96=\noexpand\the\catcode`\noexpand\`\relax
        \catcode 61=\noexpand\the\catcode`\noexpand\=\relax
      }%
    }%
    \noexpand\x
  }%
\newcommand\footnoteref[1]{\protected@xdef\@thefnmark{\ref{#1}}\@footnotemark}
\title{\LARGE \bf On Linear Convergence of PI Consensus Algorithm \\ under the Restricted Secant Inequality}
\author{Kushal Chakrabarti \and Mayank Baranwal
\thanks{K.~Chakrabarti  and M.~Baranwal are with the Division of Data \& Decision Sciences, Tata Consultancy Services Research, Mumbai, 400607 India. e-mails: \texttt{\{chakrabarti.k, baranwal.mayank\}@tcs.com}. M.~Baranwal is also with the Faculty of Systems \& Control Engineering, Indian Institute of Technology, Bombay.}}
\begin{document}

\maketitle
\thispagestyle{empty}
\pagestyle{empty}

%%%%%%%%%%%%%%%%%%%%%%%%%%%%%%%%%%%%%%%%%%%%%%%%%%%%%%%%%%%%%%%%%%%%%%%%%%%%%%%%

\begin{abstract}
This paper considers solving distributed optimization problems in peer-to-peer multi-agent networks. The network is synchronous and connected. By using the proportional-integral (PI) control strategy, various algorithms with fixed stepsize have been developed. Two notable among them are the PI algorithm and the PI consensus algorithm. Although the PI algorithm has provable linear or exponential convergence without the standard requirement of (strong) convexity, a similar guarantee for the PI consensus algorithm is unavailable. In this paper, using Lyapunov theory, we guarantee exponential convergence of the PI consensus algorithm for global cost functions that satisfy the restricted secant inequality, with rate-matching discretization, without requiring convexity. To accelerate the PI consensus algorithm, we incorporate local pre-conditioning in the form of constant positive definite matrices and numerically validate its efficiency compared to the prominent distributed convex optimization algorithms. Unlike classical pre-conditioning, where only the gradients are multiplied by a pre-conditioner, the proposed pre-conditioning modifies both the gradients and the consensus terms, thereby controlling the effect of the communication graph on the algorithm. 
\end{abstract}
\begin{keywords}
Agents-based systems, Distributed optimization algorithms, Lyapunov methods
\end{keywords}
\section{Introduction}
\label{sec:intro}

We consider solving the multi-agent distributed convex optimization problem over a peer-to-peer network of $m$ agents. Each agent $\m$ in the network can communicate with a certain set of other agents called its {\em neighbors}, denoted by $\N_i$. The inter-agent communication topology is represented by an undirected {\em graph} $\G=(\{1, \ldots, \, m\}, \E)$, with an {\em edge} $(i,j)\in \E$ or $(j,i)\in \E$ if agent $i$ and agent $j$ are {\em neighbors}, for any $i,j \in \{1,\ldots,m\}$, $i \neq j$. Each agent has a {\em local and private cost function} $f_i: \R^d \to \R$. The agents aim to compute a common vector $\bxs \in \R^d$ that minimizes the aggregate cost function held by all the agents:
\begin{align}
    \bxs \in  \arg \min_{\bx \in \R^d} \sum_{i=1}^m f_i(\bx). \label{eqn:opt_1}
\end{align}
% Throughout this paper, the network of agents is assumed to be synchronous. If $\G$ is {\em connected},~\eqref{eqn:opt_1} is equivalent to solving the distributed optimization problem~\cite{gharesifard2013distributed}
% \begin{align}
%     \min_{\{\bx_i \in \R^d, \forall i\}} & \sum_{i = 1}^m f_i(\bx_i), \nonumber\\
%     s.t. \, & \bx_i=\bx_j, \, \forall j \in \N_i, \m. \label{eqn:opt_2}
% \end{align}
We let $f: \R^d \to \R$ and $F: \R^{md} \to \R$, respectively, denote the {\em aggregate cost} and the {\em cumulative cost}, i.e., $f(\bx) = \sum_{i=1}^m f_i(\bx)$ and $F(\bx) = \sum_{i=1}^m f_i(\bx_i)$ for $\bx \in \R^{md}$. 
% Distributed optimization problems, such as~\eqref{eqn:opt_1}, find its application in decentralized resource allocation~\cite{nedich2015convergence}, power system networks~\cite{molzahn2017survey}, and machine learning~\cite{nedich2015convergence}, among others. 

A class of distributed gradient-based continuous-time algorithms that solve~\eqref{eqn:opt_1} is based on proportional-integral (PI) control law~\cite{yang2019survey}. Here, an integral error term~\cite{kia2015distributed, yi2020exponential} or a consensus term of integral errors~\cite{wang2010control} is fed back to the state dynamics to guarantee consensus and convergence~\cite{hatanaka2018passivity}. We focus on PI consensus algorithm, because we noticed that the PI consensus algorithm has convergence speed comparable or favorable to competing algorithms, e.g., EXTRA, DIGing, PI, that also have provable exponential rate. However, PI consensus can be slower than the more recent double-loop Accelerated-EXTRA~\cite{li2020revisiting} algorithm (see Figure~\ref{fig:mnist}). We find that the PI consensus algorithm can be accelerated by a suitable preconditioning strategy. Details of the preconditioning matrix is presented in Section~\ref{sec:exp}.

Existing works~\cite{kia2015distributed, yi2020exponential, wang2010control, hatanaka2018passivity} used Lyapunov theory to prove asymptotic stability of PI-based algorithms for convex local costs. However, achieving accelerated convergence to a solution of~\eqref{eqn:opt_1} requires a stronger form than just asymptotic stability. Exponential stability of the continuous-time PI algorithm has been proved in~\cite{kia2015distributed} for directed graphs when the local
costs are {\em strongly convex} and in~\cite{yi2020exponential} when the cumulative cost $F$ satisfy the {\em restricted secant inequality} (RSI) condition. RSI is one of the strong-convexity relaxation classes for guaranteeing linear convergence of centralized algorithms~\cite{necoara2019linear}. Quasi-strongly convex or composition of strongly convex function with linear map plus a linear
term satisfy RSI. Despite utilizing PI control, the PI algorithm~\cite{kia2015distributed, yi2020exponential} and the PI consensus~\cite{wang2010control, hatanaka2018passivity} algorithm are different in their estimate updates and the initialization of the integral error states~\cite{yang2019survey}. To highlight this, we briefly present their update strategies below. Each agent $i$ in PI consensus updates its state $\bx_i$ and the integral consensus $\bv_i$ as~\cite{wang2010control}
\begin{align}
    \Dot{\bx}_i \hspace{-0.2em}  = &  \sum_{j \in \N_i}  \hspace{-0.2em} (\bx_j  - \bx_i )  \hspace{-0.2em} -  \hspace{-0.2em} \beta\!\!\sum_{j \in \N_i}  \hspace{-0.2em} (\bv_j  - \bv_i )  \hspace{-0.2em} -  \hspace{-0.2em} \alpha \nabla f_i(\bx_i ), \label{eqn:pic_x} \\
    \Dot{\bv}_i  = & \beta \sum_{j \in \N_i} (\bx_j  - \bx_i ), \label{eqn:pic_v}
\end{align}
where $\alpha, \beta > 0$ are algorithm parameters.
Whereas the PI algorithm is described by~\cite{kia2015distributed}
\begin{align}
    \Dot{\bx}_i \hspace{-0.2em}  = &  \sum_{j \in \N_i}  \hspace{-0.2em} (\bx_j  - \bx_i )  \hspace{-0.2em} -  \hspace{-0.2em} \beta  \bv_i \hspace{-0.2em} -  \hspace{-0.2em} \alpha \nabla f_i(\bx_i ), \label{eqn:pi_x} \\
    \Dot{\bv}_i  = & - \beta \sum_{j \in \N_i} (\bx_j  - \bx_i ). \label{eqn:pi_v}
\end{align}
Specifically, the PI consensus algorithm~\eqref{eqn:pic_x}-\eqref{eqn:pic_v} can be interpreted as a primal-dual algorithm for solving 
\begin{align*}
    \min_{\{\bx_i \in \R^d, \forall i\}} & \sum_{i = 1}^m f_i(\bx_i), \,
    s.t. \,  (L \otimes I) \bx = \mathbf{0_{md}},
\end{align*}
and the PI algorithm~\eqref{eqn:pi_x}-\eqref{eqn:pi_v} is a primal-dual algorithm for 
\begin{align*}
    \min_{\{\bx_i \in \R^d, \forall i\}} & \sum_{i = 1}^m f_i(\bx_i), \,
    s.t. \, (L \otimes I)^{1/2} \bx = \mathbf{0_{md}},
\end{align*}
where $I$ denotes the $(d \times d)$-dimensional identity matrix,
$L$ denotes the Laplacian matrix of the graph $\G$,
$\otimes$ denotes the Kronecker product, and $\bx  = [\bx_1 ^{\top} \ldots \bx_m ^{\top}]^{\top}$.
So, each agent in PI consensus~\eqref{eqn:pic_x}-\eqref{eqn:pic_v} shares both $\bx_i$ and $\bv_i$ with its neighbors, which allows the agents to incorporate the integral consensus term $\sum_{j \in \N_i} (\bv_j  - \bv_i )$ in its $\bx_i$-update. On the other hand, each agent in PI~\eqref{eqn:pi_x}-\eqref{eqn:pi_v} shares only $\bx_i$ with its neighbors, and does not use $\bv_j$ to update its $\bx_i$. Moreover, the convergence of the PI algorithm~\eqref{eqn:pi_x}-\eqref{eqn:pi_v} requires each $\bv_i(0)$ to be initialized as the zero vector, whereas the PI consensus~\eqref{eqn:pic_x}-\eqref{eqn:pic_v} allows arbitrary initialization of each $\bv_i(0)$. So, PI consensus~\eqref{eqn:pic_x}-\eqref{eqn:pic_v} is robust to the initialization of $\bm{\bv_i(0)}$~\cite{yi2020exponential}, with both algorithms having communication complexity $\bm{\bigO(d)}$. Exponential stability of the continuous-time PI consensus~\eqref{eqn:pic_x}-\eqref{eqn:pic_v} has been proved in~\cite{liang2019exponential} for metrically subregular primal-dual gradient maps and convex local costs and in~\cite{sun2020control} for strong convex $f$. 

Hence, while exponential stability of the PI algorithm~\eqref{eqn:pi_x}-\eqref{eqn:pi_v} has been proved in~\cite{yi2020exponential} when $F$ satisfies the RSI condition, exponential stability of the PI consensus algorithm~\eqref{eqn:pic_x}-\eqref{eqn:pic_v} for the same class of cost functions has not been proved in the literature. We prove exponential stability of the continuous-time PI consensus algorithm for undirected graphs when $F$ satisfies the RSI condition with Lipschitz continuous local gradients without requiring convexity of the cost function. Note that an even stronger notion of convergence is fixed-time convergence~\cite{garg2020fixed} in continuous-time.

Despite key insights from studying optimization algorithms as continuous-time dynamics, there is substantial literature on discrete-time distributed algorithms. In the seminal distributed gradient-descent (DGD) algorithm,
each agent combines its local gradient and consensus terms to update the local estimate of $\bxs$~\cite{nedic2009distributed}. Notable discrete-time algorithms that are built upon DGD include ADMM~\cite{shi2014linear}, EXTRA~\cite{shi2015extra}, DIGing~\cite{nedic2017achieving}, PI~\cite{kia2015distributed}, APM-C~\cite{li2018sharp}, Mudag~\cite{ye2020multi}, Accelerated EXTRA~\cite{li2020revisiting}, DAccGD~\cite{rogozin2021towards}, and ACC-SONATA~\cite{tian2022acceleration}. For convex local costs, asymptotic convergence of DGD, ADMM, EXTRA, DIGing, PI, and Accelerated EXTRA is proved, assuming Lipschitz continuous or bounded local gradients. Linear convergence of these algorithms requires stronger assumptions, such as restricted strong convexity of the aggregate cost for EXTRA~\cite{shi2015extra}, {\em restricted secant inequality} for the PI algorithm~\cite{yi2020exponential}, strong convexity of each local cost~\cite{shi2014linear, nedic2017achieving, qu2017harnessing}. The implementation itself of APM-C, Mudag, DAccGD, and ACC-SONATA requires the value of the strong-convexity coefficient of the cumulative or the aggregate cost function. For the first time, we prove linear convergence of the discrete-time PI consensus algorithm for undirected graphs when $F$ satisfies the RSI condition with Lipschitz continuous local gradients without requiring convexity of individual local costs. Thus, compared to the analysis of PI algorithm~\eqref{eqn:pi_x}-\eqref{eqn:pi_v} in~\cite{yi2020exponential}, we have a similar contribution for the PI consensus algorithm~\eqref{eqn:pic_x}-\eqref{eqn:pic_v}.

The key contributions of our work are summarized below.

\begin{itemize}
    \item Although the PI algorithm~\eqref{eqn:pi_x}-\eqref{eqn:pi_v} has provable linear or exponential convergence when the cost function satisfies the {\em restricted secant inequality}, without the standard requirement of (strong) convexity, the distributed optimization literature lacks a similar guarantee for the PI consensus algorithm~\eqref{eqn:pic_x}-\eqref{eqn:pic_v} when solving~\eqref{eqn:opt_1} with the same class of cost functions. We aim to address this problem. For the first time, we rigorously prove exponential convergence of the continuous-time and discrete-time PI consensus algorithm when $F$ satisfies the {\em restricted secant inequality} condition with Lipschitz continuous local gradients without requiring convexity. Our analyses also apply to the case of local pre-conditioning with constant positive-definite matrices. The details are in Section~\ref{sec:lyap}.

    \item In Section~\ref{sec:exp}, we propose a choice of pre-conditioning and numerically show its efficacy. Pre-conditioning potentially increases the effective graph connectivity by modifying the edge weights, leading to faster consensus.
    
    % We present a locally pre-conditioned variation of the PI consensus algorithm in continuous-time that solves the convex problem~\eqref{eqn:opt_1}. Similar to the existing analyses of distributed gradient-based algorithms~\cite{hatanaka2018passivity, shi2015extra}, we show that LaSalle's principle guarantees this algorithm's asymptotic stability, but does not characterize the rate of convergence without additional assumption on the cost function. 
    
\end{itemize}

Due to their difference in local estimate update, as described in~\eqref{eqn:pic_x}-\eqref{eqn:pi_v}, analysis of the PI algorithm~\eqref{eqn:pi_x}-\eqref{eqn:pi_v} in~\cite{yi2020exponential} does not trivially apply to PI consensus~\eqref{eqn:pic_x}-\eqref{eqn:pic_v}. Thus, the novelty of our paper lies in proving exponential or linear convergence of PI consensus~\eqref{eqn:pic_x}-\eqref{eqn:pic_v}, under relaxed assumption than the existing analyses of~\eqref{eqn:pic_x}-\eqref{eqn:pic_v}. Furthermore, we propose a local pre-conditioning that can potentially reduce the convergence time of the existing PI consensus method.
\section{Assumptions and Preliminaries}
\label{sec:prelim}

% We introduce below additional notation, relevant definitions, our main assumptions, and review pivotal prior results. 

{\bf Notation}:
Let $N$ be any natural number. We use $\nabla g$ to denote the gradient of a function $g: \R^N \to \R$. We let $\norm{\bv}$ denote the Euclidean norm of $\bv \in \R^N$ and $\norm{M}$ denote the induced $2$-norm of a matrix $M$. 
% We let $\otimes$ denote the Kronecker product of two matrices. 
We let $\mathbf{0_N}$ denote the $N$-dimensional zero vector.
% , and $O$ denote the zero matrix of appropriate dimension. We let $\mathbf{1_N}$ denote the $N$-dimensional vector of ones. 
We denote a block-diagonal matrix of appropriate dimensions by $Diag(.)$. 
We use the abbreviation SPD for symmetric positive definite. 
% We let $L$ denote the Laplacian matrix of the graph $\G$. 
We let $\overline{\lambda}_L$ and $\underline{\lambda}_L$ denote the largest and the smallest non-zero eigenvalue of the Laplacian $L$. We let $\bXs = [(\bxs)^{\top}, \ldots, (\bxs)^{\top}]^{\top}$. Finally, we let $\L = L \otimes I$.

% We use the following definitions from optimization theory.
% \begin{definition}
% A differentiable function $g: \R^N \to \R$ is convex if $(\nabla g(\bx))^{\top} (\by-\bx) \leq g(\by) - g(\bx)$ for any $\bx,\by \in \R^N$.
% \end{definition}
% % \vspace{-.5em}
% \begin{definition}
% A function $\mathbf{g}: \R^N \to \R^N$ is said to be Lipschitz continuous with a constant $L_g > 0$ if it satisfies $\norm{\mathbf{g}(\bx) - \mathbf{g}(\by)} \leq L_g \norm{\bx-\by}$ for any $\bx,\by \in \R^N$.
% \end{definition}
% \vspace{-.5em}
\begin{definition}
A differentiable function $g: \R^N \to \R$ satisfies the restricted secant inequality (RSI) with respect to $\bxs$ with constant $\mu > 0$ if $(\nabla g(\bx) - \nabla g(\bxs))^{\top} (\bx-\bxs) \geq \mu \norm{\bx-\bxs}^2 \, \forall \bx \in \R^N$, where $\bxs$ is the unique global minimizer of $g$~\cite{yi2020exponential, shi2015extra}.
\end{definition}
% \vspace{-.5em}
\begin{definition}
A sequence $\{\bx(k)\}\!\subset\!\R^N$ converges linearly to $\bxs\!\in\!\R^N$ with rate $\rho\!\in\!(0,1)$ if there exist constants $C, K > 0$ such that $\norm{\bx(k)\!-\!\bxs} \leq C \rho^{k-K}, \forall k \geq K$.
\end{definition}
% \vspace{-.5em}

% We make the following assumptions to present our results in this paper.
\begin{assumption} \label{assump_1}
$\mnorm{\min_{\bx \in \R^d} {f(\bx)}} < \infty$ and the solution set of problem~\eqref{eqn:opt_1} is non-empty.
\end{assumption}\vspace{-.5em}

\begin{assumption} \label{assump_2}
Each $f_i$ is continuously differentiable. 
\end{assumption}\vspace{-.5em}

\begin{assumption} \label{assump_3}
$\G$ is undirected and connected.
\end{assumption}\vspace{-.5em}

\begin{assumption} \label{assump_4}
$F$ satisfies the restricted secant inequality with respect to $\bxs$ with a constant $\mu > 0$, and its gradient $\nabla F$ is $L_f$-Lipcshitz continuous ($F$ is smooth).
\end{assumption}

% \begin{remark}
Assumption~\ref{assump_4} is weaker than the existing works on the analysis of PI consensus algorithm~\eqref{eqn:pic_x}-\eqref{eqn:pic_v}, such as~\cite{gharesifard2013distributed, hatanaka2018passivity}, in the sense that it does not require convexity of the cost function. It further implies that every stationary point is a global minimizer, i.e., the solution set is $\{\bxs \in \R^d | \sum_{i=1}^m \nabla f_i(\bxs) = 0_d\}$.
% \end{remark}

% It is known that, under Assumption~\ref{assump_3}, the Laplacian $L$ is positive semi-definite, i.e., $L$ has non-negative eigenvalues.

The following result is standard in Lyapunov stability theory of autonomous systems~\cite{khalil1996nonlinear}.

% \begin{lemma}[LaSalle's invariance principle] \label{lem:lasalle}
% Let $V: \R^N \to \R$ be a continuously differentiable, {\color{blue} radially unbounded,} positive definite function such that $\Dot{V}(\bx) \leq 0$ for any $\bx \in \R^N$. Let $S = \{\bx \in \R^N | \Dot{V}(\bx) = 0 \}$ and suppose that no solution can stay identically in $S$ other than $\bx  \equiv \mathbf{0_N}$. Then, the origin is globally asymptotically stable.
% \end{lemma}\vspace{-.5em}

\begin{lemma} \label{lem:exp}
Consider the system $\Dot{\bx} = \mathbf{g}(\bx)$ where $\bx \in \R^N, \mathbf{g}: \R^N \to \R^N, \mathbf{g}(\mathbf{0_N})=\mathbf{0_N}$, and $\mathbf{g}$ is Lipschitz continuous over $\R^N$.
Let $V: \R^N \to \R$ be a continuously differentiable positive definite function such that $k_1 \norm{\bx}^a \leq V(\bx) \leq k_2 \norm{\bx}^a$ and $\Dot{V}(\bx) \leq - k_3 \norm{\bx}^a$ for any $\bx \in \R^N$, where $k_1, k_2, k_3, a$ are positive constants. Then, the origin is globally exponentially stable, i.e., $\norm{\bx(t)} \leq (\frac{k_2}{k_1})^{1/a} e^{-\frac{k_3}{k_2 a} t} \norm{\bx(0)}, \, \forall \bx(0) \in \R^N$.
\end{lemma}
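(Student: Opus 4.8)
The plan is to reduce the vector dynamics to a scalar differential inequality in the Lyapunov function $V$ and then integrate it via the comparison (Gr\"onwall) argument. Along any solution $\bx(t)$ of $\Dot{\bx} = \mathbf{g}(\bx)$, the chain rule gives $\frac{d}{dt} V(\bx(t)) = \nabla V(\bx(t))^{\top} \mathbf{g}(\bx(t)) = \Dot{V}(\bx(t))$, so the hypothesis $\Dot{V}(\bx) \leq -k_3 \norm{\bx}^a$ is a bound on the time-derivative of $V$ evaluated along the trajectory. The key observation is that the sandwich bound $k_1 \norm{\bx}^a \leq V(\bx) \leq k_2 \norm{\bx}^a$ lets me convert the decay bound, which is stated in terms of $\norm{\bx}^a$, into a decay bound expressed purely in terms of $V$ itself, turning it into a closed self-contained scalar inequality.

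The first step I would carry out is this conversion: from $V(\bx) \leq k_2 \norm{\bx}^a$ I get $\norm{\bx}^a \geq V(\bx)/k_2$, hence $\Dot{V}(\bx(t)) \leq -k_3 \norm{\bx(t)}^a \leq -\frac{k_3}{k_2} V(\bx(t))$. Writing $w(t) := V(\bx(t))$, this is the scalar inequality $\Dot{w}(t) \leq -\frac{k_3}{k_2} w(t)$. Second, I would invoke the comparison lemma: multiplying by the integrating factor $e^{(k_3/k_2)t}$ shows $\frac{d}{dt}\!\left( e^{(k_3/k_2)t} w(t)\right) \leq 0$, so $e^{(k_3/k_2)t} w(t)$ is nonincreasing and therefore $w(t) \leq w(0)\, e^{-\frac{k_3}{k_2} t}$, i.e. $V(\bx(t)) \leq V(\bx(0))\, e^{-\frac{k_3}{k_2} t}$. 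Third, I would reintroduce the norm bounds on both ends: using $k_1 \norm{\bx(t)}^a \leq V(\bx(t))$ on the left and $V(\bx(0)) \leq k_2 \norm{\bx(0)}^a$ on the right yields
\begin{equation*}
 k_1 \norm{\bx(t)}^a \leq V(\bx(t)) \leq V(\bx(0))\, e^{-\frac{k_3}{k_2} t} \leq k_2 \norm{\bx(0)}^a\, e^{-\frac{k_3}{k_2} t}.
\end{equation*}
Dividing by $k_1$ and taking the $a$-th root (permissible since $a>0$ and both sides are nonnegative) gives $\norm{\bx(t)} \leq (k_2/k_1)^{1/a}\, e^{-\frac{k_3}{k_2 a} t}\, \norm{\bx(0)}$, which is exactly the claimed bound.

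The main obstacle is not the algebra but justifying that the trajectory is well defined for all $t \geq 0$ so that the inequality holds on the whole half-line. Here the Lipschitz continuity of $\mathbf{g}$ supplies local existence and uniqueness of the solution through any initial condition via Picard--Lindel\"of, and the decay estimate itself precludes finite-escape: since $V(\bx(t)) \leq V(\bx(0))$ for all $t$ in the maximal interval, the bound $k_1 \norm{\bx(t)}^a \leq V(\bx(0))$ keeps the trajectory in a bounded sublevel set, so the solution extends to $[0,\infty)$. I would also note that positive definiteness of $V$ together with $V(\mathbf{0_N})=0$ (forced by the lower sandwich bound at $\bx=\mathbf{0_N}$) and $\mathbf{g}(\mathbf{0_N})=\mathbf{0_N}$ make the origin an equilibrium, so the statement ``the origin is globally exponentially stable'' is meaningful; the explicit rate $k_3/(k_2 a)$ and overshoot constant $(k_2/k_1)^{1/a}$ are read off directly from the final inequality.
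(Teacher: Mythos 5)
Your proof is correct and is precisely the standard argument (the comparison/Gr\"onwall bound $\Dot{V} \leq -\tfrac{k_3}{k_2}V$, integration, then the sandwich bounds and $a$-th root) that the paper itself invokes implicitly by citing Khalil's textbook rather than proving the lemma; your bound matches the stated one exactly, including the rate $\tfrac{k_3}{k_2 a}$ and overshoot $(\tfrac{k_2}{k_1})^{1/a}$. Your added care about global existence of solutions (Lipschitz $\mathbf{g}$ plus invariance of sublevel sets precluding finite escape) is a point the paper's statement glosses over, and it is handled correctly.
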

\section{Convergence of PI Consensus Algorithm}
\label{sec:lyap}

For simplicity in presenting our results, we assume that the solution of~\eqref{eqn:opt_1}, defined in Remark~1, is unique, denoted by $\bxs$. Later, we discuss the applicability of our results in the case when the solution is not unique.

The algorithm presented next is built upon the distributed PI consensus algorithm~\cite{wang2010control, hatanaka2018passivity} for solving~\eqref{eqn:opt_1}. However, the difference is that each agent $\m$ in this algorithm multiplies the local gradient and the consensus terms with a fixed pre-conditioner matrix $K_i$. Each $K_i$ is SPD and chosen by the agents independently before the algorithm begins. Later, in the next section, we propose a choice of pre-conditioning and numerically show its efficacy in improving the convergence time.

% \subsection{Continuous-Time Analysis}
% \label{sub:ct}

At each $t \geq 0$, each agent $\m$ in the (pre-conditioned) PI consensus algorithm updates its state $\bx_i $ and the integral consensus $\bv_i $ according to
\begin{align}
    \Dot{\bx}_i \hspace{-0.2em}  = &  K_i \Bigg(\!\sum_{j \in \N_i}  \hspace{-0.2em} (\bx_j  - \bx_i )  \hspace{-0.2em} -  \hspace{-0.2em} \beta\!\!\sum_{j \in \N_i}  \hspace{-0.2em} (\bv_j  - \bv_i )  \hspace{-0.2em} -  \hspace{-0.2em} \alpha \nabla f_i(\bx_i )\!\Bigg), \label{eqn:ctpi_x} \\
    \Dot{\bv}_i  = & \beta K_i \sum_{j \in \N_i} (\bx_j  - \bx_i ). \label{eqn:ctpi_v}
\end{align}
Here, $\alpha$ and $\beta$ are positive scalar parameters of the algorithm, whose values are presented later in this section. For convenience, we combine the agents' dynamics in a matrix form as follows. We let $\bx  = [\bx_1 ^{\top} \ldots \bx_m ^{\top}]^{\top}$ and $\bv  = [\bv_1 ^{\top} \ldots \bv_m ^{\top}]^{\top}$ respectively denote the agents' combined state vector and the integral consensus at each time $t \geq 0$. We define the combined pre-conditioner $K = Diag\left(\left\{K^i\right\}_{i=1}^m\right)$. Note that $K$ is SPD, as each $K_i$ is SPD. Then,~\eqref{eqn:ctpi_x}-\eqref{eqn:ctpi_v} can be rewritten:
\begin{align}
    \begin{bmatrix} \Dot{\bx}  \\ \Dot{\bv}  \end{bmatrix} & = - \begin{bmatrix} K \L \bx  - \beta K \L \bv  + \alpha K \nabla F(\bx ) \\ \beta K \L \bx  \end{bmatrix}. \label{eqn:ctpi_comb}
\end{align}
Recall the definition of $\bXs$ from Section~\ref{sec:prelim}.
Since $K$ is SPD, it can be verified that, under Assumptions~\ref{assump_1}-\ref{assump_4}, if $(\bXs,\mathbf{v}_*)$ is an equilibrium point of~\eqref{eqn:ctpi_comb}, then $\bxs$ solves~\eqref{eqn:opt_1}.

% The following lemma relates the equilibrium point of the algorithm~\eqref{eqn:ctpi_comb} with the solution of~\eqref{eqn:opt_1}.

% \begin{lemma} \label{lem:lem1}
% Suppose that Assumptions~\ref{assump_1}-\ref{assump_3} hold. If $(\bXs,\mathbf{v}_*)$ is an equilibrium point of~\eqref{eqn:ctpi_comb}, then $\bxs$ is a minimum point of the optimization problem~\eqref{eqn:opt_1}.
% \end{lemma}

% \begin{proof}
% $\Dot{\bv}_* = \mathbf{0_{md}} \implies \beta K \L \bXs = \mathbf{0_{md}} \implies \L \bXs = \mathbf{0_{md}}$,
% where the last equation follows from $K$ being SPD. From a similar argument,
% $\Dot{\mathbf{X}}_* = \mathbf{0_{md}} \implies -K(\L \bXs - \beta \L \mathbf{v_*} + \alpha \nabla F(\bXs)) = \mathbf{0_{md}} \implies \L \bXs - \beta \L \mathbf{v_*} + \alpha \nabla F(\bXs) = \mathbf{0_{md}} \implies \beta \L \mathbf{v_*} = \alpha \nabla F(\bXs) \implies \beta (\mathbf{1_m} \otimes \mathbf{c})^{\top} \L \mathbf{v_*} = \alpha (\mathbf{1_m} \otimes \mathbf{c})^{\top} \nabla F(\bXs) \implies \alpha (\mathbf{1_m} \otimes \mathbf{c})^{\top} \nabla F(\bXs) = 0 \implies \mathbf{c}^{\top} \sum_{i=1}^m \nabla f_i(\bxs) = 0$.
% Since the above holds for any $\mathbf{c} \in \R^d$, $\sum_{i=1}^m \nabla f_i(\bxs) = 0_d$. Under Assumption~\ref{assump_2}, f=$\sum_{i=1}^m f_i$ is convex. So, $\bxs$ solves~\eqref{eqn:opt_1}.
% \end{proof}

Upon utilizing LaSalle's invariance principle~\cite{khalil1996nonlinear}, under Assumptions~\ref{assump_1}-\ref{assump_4}, it can be proved that each agent's estimate in Algorithm~\eqref{eqn:ctpi_x}-\eqref{eqn:ctpi_v} with $\bx(0), \bv(0) \in \R^{md}$ asymptotically converges to the same solution $\bxs$ of~\eqref{eqn:opt_1}. It does not require the convexity of individual local costs. The proof for asymptotic stability uses the Lyapunov function $V: \R^{2md} \to \R$ such that $V(\bz,\by) = \frac{1}{2}\left(\bz^{\top} K^{-1} \bz + \by^{\top} K^{-1} \by\right)$ for $\bz,\by \in \R^{md}$ and follows the proof of Theorem~1 in~\cite{hatanaka2018passivity}. So, LaSalle's principle guarantees asymptotic stability of~\eqref{eqn:ctpi_comb}, but with an unknown rate. The following analysis implies that Lyapunov theory (Lemma~\ref{lem:exp}) can guarantee stronger stability for the class of $F$ that satisfies Assumption~\ref{assump_4}.

\begin{theorem} \label{thm:thm2}
Consider algorithm~\eqref{eqn:ctpi_comb} with initial condition $\bx(0), \bv(0) \in \R^{md}$. Suppose that Assumptions~\ref{assump_1}-\ref{assump_4} hold true. Then there exists $\overline{\alpha}, \overline{\beta}, \underline{\beta} \in (0,\infty)$ such that, for $\alpha < \overline{\alpha}$ and $\beta \in (\overline{\beta},\underline{\beta})$, the local estimate $\bx_i $ exponentially converges to the same $\bxs$ for each agent $\m$.
\end{theorem}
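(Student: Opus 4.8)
The plan is to recast \eqref{eqn:ctpi_comb} in error coordinates and apply the Lyapunov criterion of Lemma~\ref{lem:exp} with $a=2$. Fix an equilibrium $(\bXs,\mathbf{v}_*)$ and set $\bz = \bx - \bXs$ and $\by = \bv - \mathbf{v}_*$. Using $\L\bXs = \mathbf{0_{md}}$ and the equilibrium identity $\beta K\L\mathbf{v}_* = \alpha K\nabla F(\bXs)$, the error system reads
\begin{align*}
\Dot{\bz} &= -K\L\bz + \beta K\L\by - \alpha K\big(\nabla F(\bx) - \nabla F(\bXs)\big), \\
\Dot{\by} &= -\beta K\L\bz .
\end{align*}
The crux is that $\L$ is only positive semidefinite, with kernel equal to the consensus subspace, so $\by$ is not directly penalized and its component in $\ker\L$ is undamped. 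I would resolve this by exploiting the conserved quantity $(\mathbf{1}_m\otimes I)^\top K^{-1}\bv$, which is constant along \eqref{eqn:ctpi_comb} because $(\mathbf{1}_m^\top L)\otimes I = 0$. Choosing $\mathbf{v}_*$ so that this quantity matches its value at $\bv(0)$ forces $(\mathbf{1}_m\otimes I)^\top K^{-1}\by(t)=\mathbf{0_d}$ for all $t$; on this subspace $\mathcal S$, which is transverse to $\ker\L$, the form $\by^\top\L\by$ is positive definite, i.e. $\by^\top\L\by \ge c_0\norm{\by}^2$ for some $c_0>0$ depending on $\underline\lambda_L$ and $K$.

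Next I would augment the Lyapunov function used for asymptotic stability with a cross term, taking $W(\bz,\by) = V(\bz,\by) - \eta\,\bz^\top K^{-1}\by$ with $V$ as in the paper and $\eta\in(0,1)$ small; for $\eta<1$ the block form shows $W$ is positive definite, so $k_1\norm{(\bz,\by)}^2 \le W \le k_2\norm{(\bz,\by)}^2$. Differentiating, the $V$-part yields $\Dot{V} = -\bz^\top\L\bz - \alpha\,\bz^\top(\nabla F(\bx)-\nabla F(\bXs)) \le -\bz^\top\L\bz - \alpha\mu\norm{\bz}^2$, where the $\beta$ cross terms cancel by symmetry of $\L$ and the last bound is exactly the restricted secant inequality of Assumption~\ref{assump_4}. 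The derivative of the cross term supplies the previously missing damping $-\eta\beta\,\by^\top\L\by$ together with the indefinite remainders $\eta\,\bz^\top\L\by$, $\eta\alpha\,(\nabla F(\bx)-\nabla F(\bXs))^\top\by$, and $\eta\beta\,\bz^\top\L\bz$.

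I would then bound the indefinite remainders using Young's inequality, the $L_f$-Lipschitz estimate $\norm{\nabla F(\bx)-\nabla F(\bXs)}\le L_f\norm{\bz}$, the splitting $\bz^\top\L\by\le\tfrac12(\bz^\top\L\bz+\by^\top\L\by)$, and the coercivity $\by^\top\L\by\ge c_0\norm{\by}^2$ on $\mathcal S$, to reduce $\Dot{W}$ to a quadratic form $\Dot{W}\le -[\,\norm{\bz}\ \ \norm{\by}\,]\,Q\,[\,\norm{\bz}\ \ \norm{\by}\,]^\top$ with symmetric $Q$ whose entries are explicit in $\alpha,\beta,\eta,\mu,L_f,\underline\lambda_L,\overline\lambda_L$ and the conditioning of $K$. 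The $\by$-diagonal entry scales like $\eta(\beta-\tfrac12)c_0$, which forces a lower bound on $\beta$, while positivity of the $\bz$-diagonal entry and of $\det Q$ forces $\eta$ and $\alpha$ small (and an upper bound on $\beta$), giving a nonempty parameter window $\alpha<\overline\alpha$ with $\beta$ confined to a bounded interval with endpoints $\underline\beta,\overline\beta$, on which $Q\succ0$ and hence $\Dot{W}\le -k_3\norm{(\bz,\by)}^2$. Applying Lemma~\ref{lem:exp} with $a=2$ then gives exponential decay of $\norm{(\bz,\by)}$, so in particular $\bz\to\mathbf{0_{md}}$, i.e. each $\bx_i$ converges exponentially to $\bxs$. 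The main obstacle is precisely the semidefiniteness of $\L$: identifying the conserved quantity that pins $\by$ to $\mathcal S$, and then tuning $(\alpha,\beta,\eta)$ so that the gradient cross term $\eta\alpha(\nabla F(\bx)-\nabla F(\bXs))^\top\by$ does not destroy negative definiteness, is the technical heart; the remaining eigenvalue and Young's-inequality bookkeeping is routine.
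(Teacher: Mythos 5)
Your proposal is correct and reaches the theorem, but it takes a genuinely different route from the paper at the two places that matter. (i) \emph{The kernel of $\L$}: you identify the conserved quantity $(\mathbf{1}_m\otimes I)^{\top}K^{-1}\bv$ (constant because $(\mathbf{1}_m^{\top}L)\otimes I=0$), choose the equilibrium $\mathbf{v_*}$ whose conserved value matches that of $\bv(0)$, and thereby confine $\by(t)$ to a fixed subspace $\mathcal{S}$ complementary to $\ker\L$, on which $\by^{\top}\L\by\geq c_0\norm{\by}^2$. The paper instead splits into the cases $\L\by\neq\mathbf{0_{md}}$ and $\L\by=\mathbf{0_{md}}$, uses the bound $\by^{\top}\L\by\geq\underline{\lambda}_L\norm{\by}^2$ in the first case, and glues the two regimes together with an informal switching/subsequence argument. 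Your treatment is the more solid of the two: $\L\by\neq\mathbf{0_{md}}$ alone does \emph{not} imply $\by^{\top}\L\by\geq\underline{\lambda}_L\norm{\by}^2$ (it fails whenever $\by$ has a nonzero component in $\ker\L$), and it is precisely your invariance argument that legitimizes coercivity along trajectories without any case split. Do add the one-line justification that such a $\mathbf{v_*}$ exists: equilibria satisfy $\beta\L\mathbf{v_*}=\alpha\nabla F(\bXs)$ with $\mathbf{v_*}$ free modulo $\ker\L$, and since $\sum_{i=1}^m K_i^{-1}$ is SPD the kernel component can be solved for uniquely; also note that Lemma~\ref{lem:exp} is then applied to the dynamics restricted to the invariant subspace, which is harmless since that restriction is again an autonomous system on a linear space. (ii) \emph{The Lyapunov function}: the paper uses $\frac{c_1}{2}\bz^{\top}K^{-1}\bz+\frac{c_2}{2}\by^{\top}K^{-1}\by-c_3\bz^{\top}K^{-1}\by$ and kills the indefinite $\by^{\top}\L\bz$ term exactly by imposing $\beta c_1-\beta c_2+c_3=0$, which then requires the separate feasibility discussion of \eqref{eqn:ineq}; you take $c_1=c_2=1$, $c_3=\eta$, let the cross terms survive, and absorb them via Young's inequality and $\bz^{\top}\L\by\leq\frac{1}{2}(\bz^{\top}\L\bz+\by^{\top}\L\by)$. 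Both are sound; the paper's constraint buys cleaner algebra in $\Dot V$, yours buys a smaller and more transparent set of parameter conditions, and your resulting window ($\alpha$ small, $\beta$ in a bounded interval, $\eta$ small) matches the statement.

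The one step you should re-examine is your use of Assumption~\ref{assump_4} in the raw form $(\nabla F(\bx)-\nabla F(\bXs))^{\top}\bz\geq\mu\norm{\bz}^2$ on all of $\R^{md}$. That is the literal reading of Assumption~\ref{assump_4} (and of Remark~1), and under it your step is valid; it even renders the $\bz^{\top}\L\bz$ term superfluous at that point. The paper's own proof, however, never uses RSI in this raw form: it invokes Lemma~2 of~\cite{yi2020exponential} to obtain the combined inequality \eqref{eqn:lem2}, in which the gradient term must be assisted by $(c_1-\beta c_3)\bz^{\top}\L\bz$ and the constant $\mu_1$ degrades accordingly. That device is what one needs if RSI is available only for the aggregate cost $f$ on $\R^d$ --- a strictly weaker hypothesis than RSI of $F$ at $\bXs$, since $\bXs$ is in general not even a critical point of $F$. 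If that weaker hypothesis is the intended one, your direct bound is a gap, but an easily repaired one: you already retain $-(1-\eta\beta-\eta/2)\bz^{\top}\L\bz\leq 0$, so keep that term and invoke \eqref{eqn:lem2} in place of raw RSI, with $\mu_1$ replacing $\alpha\mu$ in your matrix $Q$.
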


\begin{proof}
We define the estimation errors at time $t \geq 0$ as
$\bz  = \bx  - \bXs, \, \by  = \bv  - \mathbf{v_*}$.
Then, from~\eqref{eqn:ctpi_comb},
\begin{align}
    \begin{bmatrix} \Dot{\bz} \\ \Dot{\by}  \end{bmatrix} \hspace{-0.3em} = \hspace{-0.3em}
    - \begin{bmatrix} K (\L \bz  - \beta \L \by  + \alpha \nabla F(\bx ) - \alpha \nabla F(\bXs)) \\ \beta K \L \bz  \end{bmatrix}. \label{eqn:ctpi_err}
\end{align}
Consider the case $\L \by  \neq \mathbf{0_{md}}$. We define Lyapunov candidate $V: \R^{2md} \to \R$ such that 
$$V(\bz,\by) = \frac{c_1}{2} \bz^{\top} K^{-1} \bz + \frac{c_2}{2} \by^{\top} K^{-1} \by - c_3 \bz^{\top} K^{-1} \by,$$
where $c_1, c_2, c_3 > 0$. Along the trajectories of~\eqref{eqn:ctpi_err},
$\Dot{V} =  c_1 \bz ^{\top} K^{-1} \Dot{\bz}  + c_2 \by ^{\top} K^{-1} \Dot{\by} - c_3 \bz ^{\top} K^{-1} \Dot{\by}  - c_3 \by ^{\top} K^{-1} \Dot{\bz}$.
Upon substituting from~\eqref{eqn:ctpi_err}, $\Dot{V} = - (c_1 - \beta c_3) \bz ^{\top} \L \bz  - \alpha c_1 \bz ^{\top} (\nabla F(\bx ) - \nabla F(\bXs))  + (\beta c_1 - \beta c_2 + c_3) \by ^{\top} \L \bz - c_3 (\beta \by ^{\top} \L \by  - \alpha \by ^{\top} (\nabla F(\bx ) - \nabla F(\bXs)))$.
Suppose, $(c_1 - \beta c_3) > 0$. We use the following result from Lemma~2 of~\cite{yi2020exponential}. Under Assumptions~\ref{assump_1}-\ref{assump_4}, there exist $\overline{\alpha} > 0, \mu_1 = \min\{ \frac{\mu}{2m} \alpha c_1, (c_1- \beta c_3) \underline{\lambda}_L - \frac{2m L_f^2 + \mu \alpha c_1 L_f}{\mu \alpha c_1}\}$ s.t. if $\alpha < \overline{\alpha}$,
\begin{align}
    & \alpha c_1 (\nabla F(\bx) - \nabla F(\bXs))^{\top} (\bx-\bXs) + (c_1 - \beta c_3) \bx^{\top} \L \bx \nonumber \\
    & \geq \mu_1 \norm{\bx - \bXs}^2, \, \forall \bx \in \R^{md}. \label{eqn:lem2}
\end{align}
Following the proof of Theorem 1 in~\cite{yi2020exponential}, from above, $\Dot{V} \leq - \mu_1 \norm{\bz }^2 + (\beta c_1 - \beta c_2 + c_3) \by ^{\top} \L \bz  - c_3 (\beta \by ^{\top} \L \by  - \alpha \by ^{\top} (\nabla F(\bx ) - \nabla F(\bXs)))$.
If $(\beta c_1 - \beta c_2 + c_3) = 0$,
\begin{align*}
    & \Dot{V} \leq - \mu_1 \norm{\bz }^2 - \beta c_3 \underline{\lambda}_L \norm{\by }^2 + \alpha c_3 \by ^{\top} (\nabla F(\bx ) - \nabla F(\bXs)) \\
    & \leq - \mu_1 \norm{\bz }^2 - \beta c_3 \underline{\lambda}_L \norm{\by }^2 + \alpha c_3 \norm{\by} \norm{\nabla F(\bx ) - \nabla F(\bXs)} \\
    & \leq - \mu_1 \norm{\bz }^2 - \beta c_3 \underline{\lambda}_L \norm{\by }^2 + \alpha c_3 p \norm{\by }^2 \\
    & + \frac{\alpha c_3}{p} \norm{\nabla F(\bx ) - \nabla F(\bXs)}^2 \\
    & \leq - (\mu_1 - \frac{\alpha c_3 L_f^2}{p}) \norm{\bz }^2 - c_3 (\beta \underline{\lambda}_L - \alpha p) \norm{\by }^2,
\end{align*}
where $p > 0$ and the third inequality follows from $\nabla F$ being $L_f$-Lipschitz continuous under Assumption~\ref{assump_4}. 
Suppose that $(\mu_1 - \frac{\alpha c_3 L_f^2}{p}) > 0$ and $(\beta \underline{\lambda}_L - \alpha p) > 0$. From the definition of $V$, $c_1 c_2 > c_3^2$ implies that $V(\bz,\by) > 0$ for any $\begin{bmatrix} \bz^{\top} & \by^{\top} \end{bmatrix}^{\top} \neq \mathbf{0_{2md}}$ and $V(\mathbf{0_{2md}}) = 0$. Hence, if  the four positive scalar parameters $c_1, c_2, c_3, \beta$ satisfy
\begin{align}
    c_1 c_2 > c_3^2, c_1 > \beta c_3, \alpha^2 L_f^2 c_3 < \beta \mu_1 \underline{\lambda}_L, \beta c_1 - \beta c_2 + c_3 = 0, \label{eqn:ineq}
\end{align}
then one can choose $p \in (\frac{\alpha L_f^2 c_3}{\mu_1}, \frac{\beta \underline{\lambda}_L}{\alpha})$, so that 
\begin{align}
    \Dot{V} \hspace{-0.3em} \leq \hspace{-0.3em} & - \hspace{-0.3em} \min\{\mu_1 \hspace{-0.3em} - \hspace{-0.3em} \frac{\alpha c_3 L_f^2}{p}, \hspace{-0.2em} c_3 (\beta \underline{\lambda}_L - \alpha p) \hspace{-0.2em}\} \hspace{-0.2em} (\norm{\bz }^2 + \norm{\by }^2), \hspace{-0.3em} \label{eqn:Vdot_bd} \\
    V \geq & \underline{\epsilon} (\norm{\bz }^2 + \norm{\by }^2), \label{eqn:V_bd_low}
\end{align}
for some $\underline{\epsilon} > 0$. By definition, $V(\bz,\by) \leq \norm{K^{-1}} (\frac{c_1}{2} \norm{\bz}^2 + \frac{c_2}{2} \norm{\by}^2 + \frac{c_3}{2} (\norm{\bz}^2 + \norm{\by}^2))$, implying that
\begin{align}
    V(\bz ,\by ) \leq \overline{\epsilon} (\norm{\bz }^2 + \norm{\by }^2), \label{eqn:V_bd_upp}
\end{align}
for some $\overline{\epsilon} > 0$. So, the conditions of Lemma~\ref{lem:exp} hold. 

% Corollary~3.4 in~\cite{khalil1996nonlinear}

Next, we consider the other case $\L \by  = \mathbf{0_{md}}$. The error dynamics~\eqref{eqn:ctpi_err} is reduced to $\Dot{\bz}  = - K \L \bz  - \alpha K (\nabla F(\bx) - \nabla F(\bXs))$. Consider the Lyapunov candidate $V_2: \R^{md} \to \R$ such that $V_2(\bz) = \frac{1}{2} \bz^{\top} K^{-1} \bz$ for $\bz \in \R^{md}$. Along the above trajectories,
$\Dot{V}_2(\bz ) = \bz ^{\top} K^{-1} \Dot{\bz} = - \bz ^{\top} \L \bz  - \alpha \bz ^{\top} (\nabla F(\bx ) - \nabla F(\bXs))$.
Under Assumptions~\ref{assump_1}-\ref{assump_4}, we have similar result as~\eqref{eqn:lem2}: there exist $\overline{\overline{\alpha}}, \mu_2 > 0$ such that if $\alpha < \overline{\overline{\alpha}}$ then,
\begin{align*}
    & \alpha (\nabla F(\bx) - \nabla F(\bXs))^\top (\bx-\bXs) + \bx^\top \L \bx \geq \mu_2 \norm{\bx - \bXs}^2\!\!\!.
\end{align*}
Then, $\Dot{V}_2(\bz ) \leq - \mu_2 \norm{\bz }^2$. Moreover, $V_2(\bz)$ is quadratic in $\bz \in \R^{md}$. So, the conditions of Lemma~\ref{lem:exp} hold.

Since we have a switched dynamics of~\eqref{eqn:ctpi_err} depending on whether $\by(t)$ is in the null-space of $\L$, we make the following argument to conclude exponential stability of~\eqref{eqn:ctpi_err}. 
Consider any time $t=t_1$ when the Lyapunov function $V_2$ is active. Since $V_2(\bz)$ is quadratic for any $\bz$, $V_2(t_1) \geq k_1 \norm{\bz(t_1)}^2$ and $V_2  \leq k_2 \norm{\bz }^2$ for some $k_1, k_2 > 0$ and for all $t$. Moreover, $\bz $ exponentially decreases when either $V$ or $V_2$ is active. So, there exists $t_2 \in [t_1, \infty)$ such that $V_2(t_2) \leq k_2 \norm{\bz(t_2)}^2 \leq k_1 \norm{\bz(t_1)}^2 \leq V_2(t_1)$. Hence, there exists a subsequence of $\R_{>0}$ over which $V_2$ is exponentially decreasing. Owing to Lemma~\ref{lem:exp}, the proof is complete.
\end{proof}

Next, we verify the feasibility of~\eqref{eqn:ineq}.  We have
\begin{align*}
    & \beta c_1 - \beta c_2 + c_3 = 0, c_1 c_2 > c_3^2 \iff \beta c_1^2 + c_1 c_3 > \beta c_3^2, \\
    & c_1 > \beta c_3 \iff \beta c_1^2 + c_1 c_3 > (\beta^2 + 1) c_1 c_3.   
\end{align*}
$(\beta^2 + 1) c_1 > c_1$ for any $c_1 > 0$. We choose $c_1$ and $c_3$ such that $c_1 > \beta c_3$. Then, $(\beta^2 + 1) c_1 > c_1 > \beta c_3$, which implies that $(\beta^2 + 1) c_1 c_3 > \beta c_3^2$. The above and $c_1 > \beta c_3$ together imply that $\beta c_1^2 + c_1 c_3 > \beta c_3^2$.
So, with $\beta > 0$, the conditions $c_1 c_2 > c_3^2, c_1 > \beta c_3, \beta c_1 - \beta c_2 + c_3 = 0$ are feasible. We have two cases for
\begin{align*}
    \mu_1 = \min\{ \frac{\mu}{2m} \alpha c_1, (c_1- \beta c_3) \underline{\lambda}_L - \frac{2m L_f^2 + \mu \alpha c_1 L_f}{\mu \alpha c_1}\}.
\end{align*}

\noindent
{\bf Case-I}: $\mu_1 = \frac{\mu}{2m} \alpha c_1$. In this case,
\begin{align*}
    \alpha^2 L_f^2 c_3 < \beta \mu_1 \underline{\lambda}_L \iff \beta c_3 \leq \frac{\beta^2 c_1 \underline{\lambda}_L \mu}{2m L_f^2 \alpha}.
\end{align*}
There exists $\overline{\alpha}$ such that, for $\alpha < \overline{\alpha}$ we have $\frac{2m L_f^2 \alpha}{\beta^2 \underline{\lambda}_L \mu} < 1$, i.e., $\frac{\beta^2 c_1 \underline{\lambda}_L \mu}{2m L_f^2 \alpha} > c_1$. Thus, $c_1 > \beta c_3, \alpha^2 L_f^2 c_3 < \beta \mu_1 \underline{\lambda}_L$ are feasible for $\alpha < \overline{\alpha}$.

\noindent
{\bf Case-II}: $\mu_1 = (c_1- \beta c_3) \underline{\lambda}_L - \frac{2m L_f^2 + \mu \alpha c_1 L_f}{\mu \alpha c_1}$. We denote $q = \frac{2m L_f^2 + \mu \alpha c_1 L_f}{\mu \alpha c_1}$. Then, $\mu_1 = (c_1- \beta c_3) \underline{\lambda}_L - q$ and
\begin{align*}
    \alpha^2 L_f^2 c_3 < \beta \mu_1 \underline{\lambda}_L & \iff c_3 < \frac{\beta \underline{\lambda}_L((c_1- \beta c_3) \underline{\lambda}_L - q)}{\alpha^2 L_f^2} \\
    & \iff \beta c_3 < \frac{\beta^2 \underline{\lambda}_L (c_1 \underline{\lambda}_L - q)}{\alpha^2 L_f^2 + \beta^2 \underline{\lambda}_L^2}.
\end{align*}
We denote $p = \frac{1 + \beta^2 \underline{\lambda}_L^2}{\beta^2 \underline{\lambda}_L^2}$. Note that $p > 1$. Now,
\begin{align*}
    p = \frac{1 + \beta^2 \underline{\lambda}_L^2}{\beta^2 \underline{\lambda}_L^2} & \implies c_1 (1 - p \frac{\beta^2 \underline{\lambda}_L^2}{1 + \beta^2 \underline{\lambda}_L^2}) = 0 \\
    & \implies c_1 (1 - p \frac{\beta^2 \underline{\lambda}_L^2}{1 + \beta^2 \underline{\lambda}_L^2}) > - p \frac{\beta^2 \underline{\lambda}_L q}{1 + \beta^2 \underline{\lambda}_L^2} \\
    & \iff c_1 > p \frac{\beta^2 \underline{\lambda}_L (c_1 \underline{\lambda}_L - q)}{1 + \beta^2 \underline{\lambda}_L^2}.
\end{align*}
Then, $c_1 > p \frac{\beta^2 \underline{\lambda}_L (c_1 \underline{\lambda}_L - q)}{\alpha^2 L_f^2+ \beta^2 \underline{\lambda}_L^2} > \frac{\beta^2 \underline{\lambda}_L (c_1 \underline{\lambda}_L - q)}{\alpha^2 L_f^2+ \beta^2 \underline{\lambda}_L^2}$ for $\alpha < \frac{1}{L_f}$. Thus, $c_1 > \beta c_3, \alpha^2 L^2 c_3 < \beta \mu_1 \underline{\lambda}_L$ are feasible for $\alpha < \frac{1}{L_f}$.

\noindent 
From the above arguments, we conclude that~\eqref{eqn:ineq} is feasible under the conditions of Theorem~\ref{thm:thm2}.

% \subsection{Discrete-Time Guarantees}
% \label{sub:dt}

Next, we use a first-order explicit Euler discretization of~\eqref{eqn:ctpi_x}-\eqref{eqn:ctpi_v} with fixed stepsize $h > 0$ to obtain the following algorithm from~\eqref{eqn:ctpi_x}-\eqref{eqn:ctpi_v}. For each $k \geq 0$ and each agent $i$,
\begin{align}
    & \bx_i(k+1) = \bx_i(k) + h K_i \sum_{j \in \N_i} (\bx_j(k) - \bx_i(k)) \nonumber \\
    & - h \beta K_i \sum_{j \in \N_i}(\bv_j(k) - \bv_i(k)) - h \alpha K_i \nabla f_i(\bx_i(k)), \label{eqn:dtpi_x} \\
    & \bv_i(k+1) = \bv_i(k) + h \beta K_i \sum_{j \in \N_i} (\bx_j(k) - \bx_i(k)). \label{eqn:dtpi_v}
\end{align}
The above algorithm can be compactly written as
\begin{align}
    & \begin{bmatrix} \bx(k+1) \\ \bv(k+1) \end{bmatrix} = \begin{bmatrix} \bx(k) \\ \bv(k) \end{bmatrix} \nonumber \\
    & - h \begin{bmatrix}
        K \L \bx(k) - \beta K \L \bv(k) + \alpha K \nabla F(\bx(k)) \\ \beta K \L \bx(k)
    \end{bmatrix}. \label{eqn:dtpi_comb}
\end{align}

The following result makes use of Lyapunov stability theory to prove linear convergence of the locally pre-conditioned algorithm~\eqref{eqn:dtpi_comb} under the same Assumptions of Theorem~\ref{thm:thm2}, leading to rate-matching or consistent discretization of~\eqref{eqn:ctpi_comb}. 

\begin{theorem} \label{thm:thm3}
Consider algorithm~\eqref{eqn:dtpi_comb} with initial condition $\bx(0), \bv(0) \in \R^{md}$. Suppose that Assumptions~\ref{assump_1}-\ref{assump_4} hold true. Then there exists $\overline{\alpha}, \overline{\beta}, \underline{\beta}, \overline{h} \in (0,\infty)$ such that, for $\alpha < \overline{\alpha}$, $\beta \in (\overline{\beta},\underline{\beta})$, and $h < \overline{h}$, the local estimate $\bx_i(k)$ linearly converges to the same $\bxs$ for each agent $\m$.
\end{theorem}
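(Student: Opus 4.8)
The plan is to reuse the quadratic Lyapunov function $V$ built in the proof of Theorem~\ref{thm:thm2} and to show that it contracts geometrically along the Euler map~\eqref{eqn:dtpi_comb} once $h$ is small. First I would pass to the error coordinates $\bz = \bx - \bXs$ and $\by = \bv - \mathbf{v_*}$, so that~\eqref{eqn:dtpi_comb} reads $\mathbf{w}(k+1) = \mathbf{w}(k) + h\,\mathbf{g}(\mathbf{w}(k))$, where $\mathbf{w} = [\bz^{\top}\ \by^{\top}]^{\top}$ and $\mathbf{g}$ is exactly the right-hand side of the continuous-time error dynamics~\eqref{eqn:ctpi_err}. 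Writing $V(\mathbf{w}) = \tfrac{1}{2}\mathbf{w}^{\top} P \mathbf{w}$ with the SPD matrix $P$ assembled from $c_1,c_2,c_3$ and $K^{-1}$ (which is SPD precisely because $c_1 c_2 > c_3^2$), the exact increment of a quadratic $V$ along one Euler step is $V(\mathbf{w}(k+1)) - V(\mathbf{w}(k)) = h\,\Dot{V}(\mathbf{w}(k)) + \tfrac{h^2}{2}\,\mathbf{g}(\mathbf{w}(k))^{\top} P\,\mathbf{g}(\mathbf{w}(k))$, where $\Dot{V}$ is precisely the continuous-time rate already bounded in Theorem~\ref{thm:thm2}.

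The two terms are then estimated separately. For the linear-in-$h$ term, the bound~\eqref{eqn:Vdot_bd} from the proof of Theorem~\ref{thm:thm2} gives $\Dot{V}(\mathbf{w}) \leq -\gamma(\norm{\bz}^2 + \norm{\by}^2)$ for some $\gamma > 0$ under the stated conditions on $\alpha,\beta,c_1,c_2,c_3$. For the quadratic-in-$h$ term, since $\mathbf{g}(\mathbf{0}) = \mathbf{0}$ and $\mathbf{g}$ is Lipschitz on $\R^{2md}$---its linear part being controlled by $\norm{K}\,\overline{\lambda}_L$ and its gradient part by $\alpha\norm{K}L_f$ through Assumption~\ref{assump_4}---there is $\ell > 0$ with $\norm{\mathbf{g}(\mathbf{w})} \leq \ell\norm{\mathbf{w}}$, whence $\tfrac{h^2}{2}\,\mathbf{g}^{\top} P\,\mathbf{g} \leq \tfrac{h^2}{2}\norm{P}\ell^2(\norm{\bz}^2+\norm{\by}^2)$. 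Combining the two estimates yields $V(\mathbf{w}(k+1)) - V(\mathbf{w}(k)) \leq -h\big(\gamma - \tfrac{h}{2}\norm{P}\ell^2\big)(\norm{\bz}^2+\norm{\by}^2)$, so taking $\overline{h} = 2\gamma/(\norm{P}\ell^2)$ makes the parenthesized factor strictly positive for every $h < \overline{h}$.

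It then remains to convert this absolute decrease into a geometric one. The sandwich bounds~\eqref{eqn:V_bd_low} and~\eqref{eqn:V_bd_upp}, i.e.\ $\underline{\epsilon}(\norm{\bz}^2+\norm{\by}^2) \leq V \leq \overline{\epsilon}(\norm{\bz}^2+\norm{\by}^2)$, give $V(\mathbf{w}(k+1)) \leq \rho\,V(\mathbf{w}(k))$ with $\rho = 1 - \tfrac{h}{\overline{\epsilon}}\big(\gamma - \tfrac{h}{2}\norm{P}\ell^2\big)$; a possible further shrinking of $\overline{h}$ guarantees $\rho \in (0,1)$, and since $\rho = 1 - h\gamma/\overline{\epsilon} + \bigO(h^2)$ the discrete rate recovers the continuous one as $h \to 0$, giving the claimed rate-matching. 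Iterating, $V(\mathbf{w}(k)) \leq \rho^k V(\mathbf{w}(0))$, and hence $\norm{\bz(k)}^2 \leq V(\mathbf{w}(k))/\underline{\epsilon} \leq \rho^k V(\mathbf{w}(0))/\underline{\epsilon}$, so each $\bx_i(k)$ converges linearly to $\bxs$ with rate $\sqrt{\rho}$.

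The step I expect to be the main obstacle is handling the switched structure inherited from Theorem~\ref{thm:thm2}. The contraction above uses the cross-term function $V$ valid on the mode $\L\by \neq \mathbf{0_{md}}$, whereas on the mode $\L\by = \mathbf{0_{md}}$ one must instead contract the quadratic $V_2(\bz) = \tfrac{1}{2}\bz^{\top} K^{-1}\bz$ by the identical Euler-increment argument, using its continuous-time rate $\Dot{V}_2 \leq -\mu_2\norm{\bz}^2$. Because the discrete trajectory may jump between the two modes, I would (i) choose $\overline{h}$ uniformly small enough to force geometric decay of both $V$ and $V_2$, and (ii) reproduce the subsequence argument at the end of the proof of Theorem~\ref{thm:thm2} to conclude that $\norm{\bz(k)}$ still decays geometrically across mode switches. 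The $\bigO(h^2)$ discretization term is harmless in both modes precisely because it is dominated by the strictly negative continuous-time rate once $h$ is taken sufficiently small.
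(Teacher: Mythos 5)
Your proposal is correct and follows essentially the same route as the paper: the same cross-term Lyapunov function $V$ (and $V_2$ on the mode $\L\by = \mathbf{0_{md}}$), the same split of the one-step increment into a linear-in-$h$ term controlled by the continuous-time bound~\eqref{eqn:Vdot_bd} and a quadratic-in-$h$ term controlled by $\norm{\Tilde{F}(\bZ)}^2 \leq \epsilon_2 \norm{\bZ}^2$, the same conversion to a geometric contraction via the sandwich bounds~\eqref{eqn:V_bd_low}--\eqref{eqn:V_bd_upp}, and the same switched-mode subsequence argument. The only cosmetic difference is that you exploit the exactness of the second-order Taylor expansion for the quadratic $V$, whereas the paper invokes the descent inequality (Lemma~5 of~\cite{yi2020exponential}) with the Lipschitz constant $\eta$ of $\nabla V$; for a quadratic function these are the same estimate.
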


\begin{proof}
We define the error at iteration $k \geq 0$ as $\bZ(k) = \begin{bmatrix} \bz(k) \\ \by(k) \end{bmatrix} = \begin{bmatrix} \bx(k) - \bXs \\ \bv(k) - \mathbf{v_*} \end{bmatrix}$,
For $\bZ = [\bz^{\top}, \by^{\top}]^{\top} \in \R^{2md}$, let
\begin{align*}
\Tilde{F}(\bZ) = \begin{bmatrix}
        K \L \bz - \beta K \L \by + \alpha K (\nabla F(\bx) - \nabla F(\bXs)) \\ \beta K \L \bz
    \end{bmatrix}.
\end{align*}
Then, from~\eqref{eqn:dtpi_comb}, the error dynamics is
\begin{align}
    \bZ(k+1) = \bZ(k) - h \Tilde{F}(\bZ(k)). \label{eqn:dtpi_err}
\end{align}

Consider the case $\L \by(k) \neq 0_{md}$. As in the proof of Theorem~\ref{thm:thm2}, we define the Lyapunov function candidate $V: \R^{2md} \to \R$ such that $V(\bZ) = \frac{c_1}{2} \bz^{\top} K^{-1} \bz + \frac{c_2}{2} \by^{\top} K^{-1} \by - c_3 \bz^{\top} K^{-1} \by$ for $\bZ = [\bz^{\top} \by^{\top}]^{\top} \in \R^{2md}$, where $c_1, c_2, c_3 > 0$. Clearly, $\nabla V(\bZ)$ is Lipschitz continuous with some Lipschitz constant $\eta > 0$. Then, from Lemma~5 in~\cite{yi2020exponential}, we get
$V(\bZ(k+1) - V(\bZ(k)) \leq (\bZ(k+1) - \bZ(k))^{\top} \nabla V(\bZ(k)) + \frac{\eta}{2} \norm{\bZ(k+1) - \bZ(k)}^2$.
Along the trajectories of~\eqref{eqn:dtpi_err}, 
\begin{align}
    & V(\bZ(k+1) - V(\bZ(k)) \nonumber \\
    & \leq - h \Tilde{F}(\bZ(k))^{\top} \nabla V(\bZ(k)) + \frac{h^2 \eta}{2} \norm{\Tilde{F}(\bZ(k))}^2. \label{eqn:Vdiff}
\end{align}
Following the proof of Theorem~\ref{thm:thm2}, similar to~\eqref{eqn:Vdot_bd}-\eqref{eqn:V_bd_upp}:
\begin{align}
    - \Tilde{F}(\bZ(k))^{\top} \nabla V(\bZ(k)) & \leq - \epsilon_1 V(\bZ(k)), \label{eqn:Vdiff_1} \\
    V(\bZ(k)) & \geq \underline{\epsilon} \norm{\bZ(k)}^2. \label{eqn:Vz_low}
\end{align}
From the definition, $\|{\Tilde{F}(\bZ)}\|^2 \leq \norm{K}^2 (\beta^2 \overline{\lambda}_L^2 \norm{\bz}^2 + \norm{(\L \bz - \beta \L \by + \alpha(\nabla F(\bx) - \nabla F(\bXs)))}^2$.
Upon expanding the second term on the RHS above and utilizing AM-GM inequality, the definition of induced $2$-norm of matrices, and Lipschitz continuity of $\nabla F$ under Assumption~\ref{assump_4}, we obtain that there exists $\epsilon_2 > 0$ such that $\norm{\Tilde{F}(\bZ)}^2 \leq \epsilon_2 \norm{\bZ}^2$. Upon substituting above from~\eqref{eqn:Vz_low}, there exists $\epsilon_3 = \frac{\epsilon_2}{\underline{\epsilon}} > 0$ such that $\norm{\Tilde{F}(\bZ)}^2 \leq \epsilon_3 V(\bZ)$. Upon substituting from above and~\eqref{eqn:Vdiff_1} in~\eqref{eqn:Vdiff},
$V(\bZ(k+1)) \leq  (1 - h\frac{2\epsilon_1 - h \eta \epsilon_3}{2}) V(\bZ(k))$.
If $h \in (0, \frac{2\epsilon_1}{\eta \epsilon_3})$, then $\rho: = (1 - h\frac{2\epsilon_1 - h \eta \epsilon_3}{2}) < 1$. 

For $\L \by(k) = 0_{md}$, we consider the Lyapunov candidate $V_2: \R^{md} \to \R$ such that $V_2(\bz) = \frac{1}{2} \bz^{\top} K^{-1} \bz$ for $\bz \in \R^{md}$. Proceeding similarly as $V$ above, we obtain $V(\bZ(k+1)) \leq \rho_2 V_2(\bZ(k))$ for some $\rho_2 < 1$. The proof follows from the argument in the last paragraph of Theorem~\ref{thm:thm2}'s proof.
\end{proof}

\begin{remark}
The aforementioned analysis has assumed a unique solution $\bxs$ of~\eqref{eqn:opt_1}. When the solution set is not singleton, we denote the solution set of~\eqref{eqn:opt_1} by $\bx_{sol}$, and define $\bX_{sol} = [(\bx_{sol})^{\top}, \ldots, (\bx_{sol})^{\top}]^{\top}$. In this case, $F$ is said to satisfy the RSI condition if $(\nabla F(\bx) - \nabla F(\mathcal{P}_{\bX_{sol}}(\bx))^{\top} (\bx-\mathcal{P}_{\bX_{sol}}(\bx)) \geq \mu \norm{\bx-\mathcal{P}_{\bX_{sol}}(\bx)}^2 \, \forall \bx \in \R^{md}$, where $\mathcal{P}_{\bX_{sol}}(\bx) := \arg \min_{\by \in \bX_{sol}} \norm{\bx - \by}^2$ is the projection of $\bx$ onto $\bX_{sol}$~\cite{yi2020exponential}. We define $\bx^0 = [(\mathcal{P}_{\bX_{sol}}(\bx))^{\top}, \ldots, (\mathcal{P}_{\bX_{sol}}(\bx))^{\top}]^{\top}$. We assume that the gradients of the cumulative cost $F$ at any point on the solution set, i.e., $\{\nabla F(\bx) | \bx \in \bX_{sol} \}$ is a singleton. Under this assumption, the proofs of Theorem~\ref{thm:thm2}-\ref{thm:thm3} are valid upon replacing $\bXs$ with $\bx^0$. This is possible by invoking the properties of the projection operator from Theorem~1.5.5 in~\cite{nesterov2018lectures}.
\end{remark}

\section{Numerical Results}
\label{sec:exp}

\begin{figure*}[htb!]
\centering
\begin{adjustbox}{minipage=\linewidth,scale=0.8}
\begin{subfigure}{.33\textwidth}
  \begin{center}
  \includegraphics[width = \textwidth]{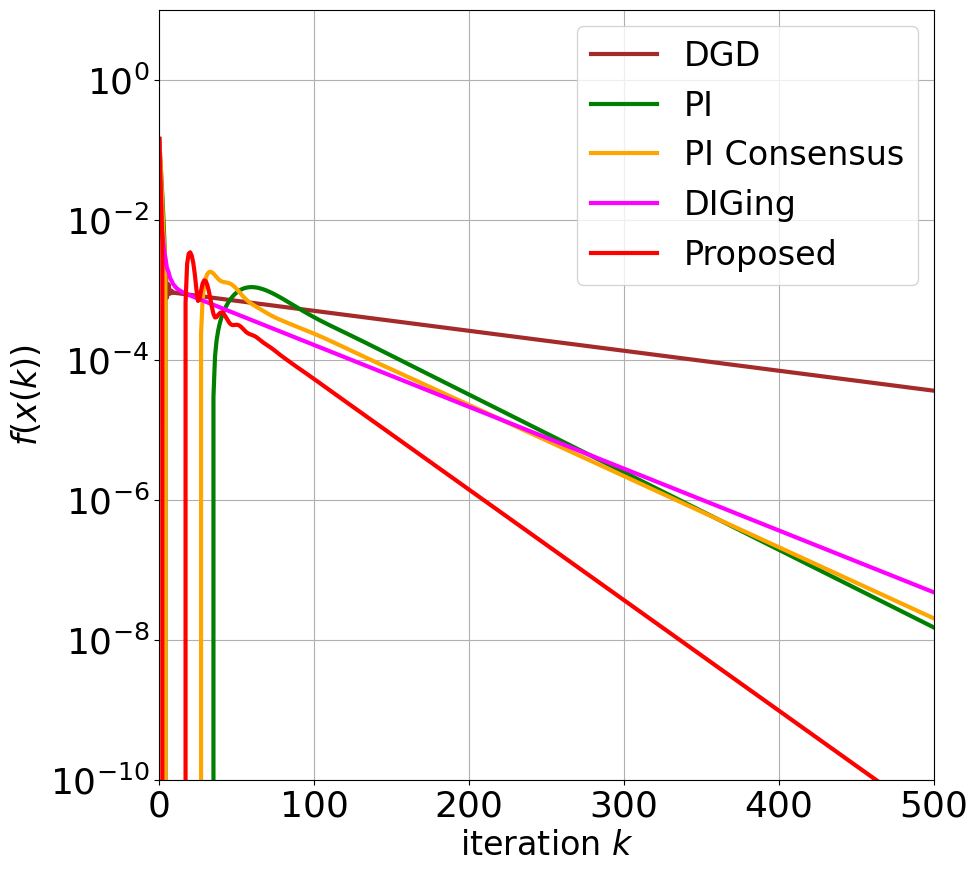}
  \caption{aggregate cost}
  \end{center}
\end{subfigure}%
\hfill
\begin{subfigure}{.33\textwidth}
  \begin{center}
  \includegraphics[width = \textwidth]{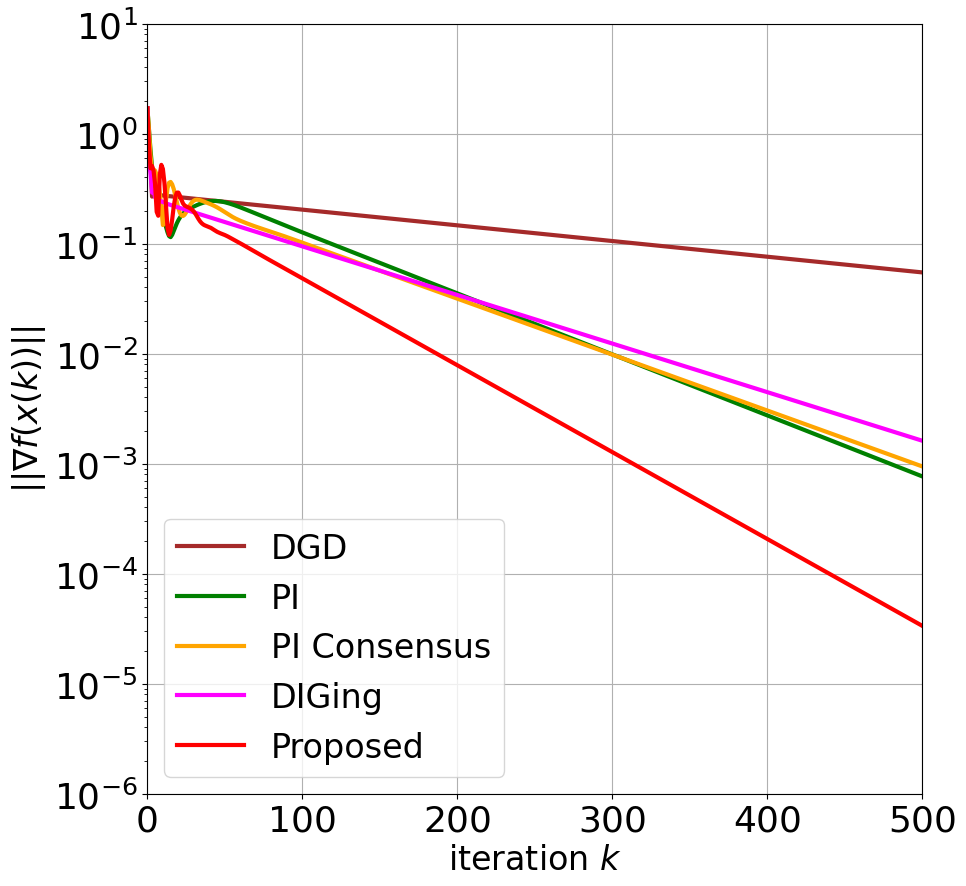}
  \caption{norm of gradient of aggregate cost}
  \end{center}
\end{subfigure}%
\hfill
\begin{subfigure}{.33\textwidth}
  \begin{center}
  \includegraphics[width = \textwidth]{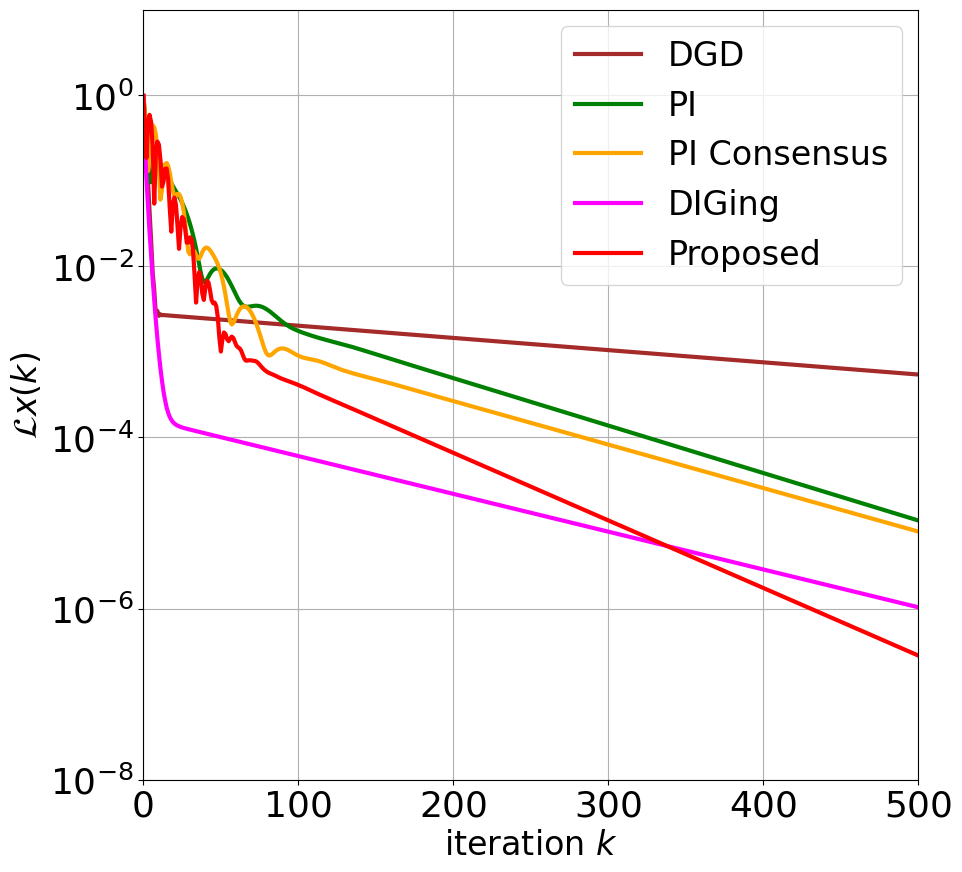}
  \caption{consensus term}
  \end{center}
\end{subfigure}%
\end{adjustbox}
\caption{\it Comparison of different distributed optimization algorithms for solving the problem from~\cite[Section VI]{yi2020exponential}.}
\label{fig:sc}
\vspace{-1em}
\end{figure*}
\vspace{-0.2em}

A suitable choice of $K_i$ could impact the convergence rate of PI consensus. For example, when $f_i$'s are convex, consider $K_i = \left(\nabla^2 f_i(\bx_i(0)) + \gamma I \right)^{-1}$ where $\gamma > 0$. In the case of single-agent, $K'_i = \left(\nabla^2 f_i(\bx_i(t))\right)^{-1}$ results in Newton's method which is fast. However, it requires computing the expensive inverse Hessian at each $t$. Instead, we limit computing the inverse Hessian only once, before initiating the algorithm. Since the matrix inverse for $K_i$ is computed once by each agent, it does not increase the per-iteration computational complexity of PI consensus. The parameter $\gamma$ extends this pre-conditioner to non-strongly convex $f_i$'s. Hence, we consider this choice of $K^i$.

To show the efficiency of~\eqref{eqn:dtpi_x}-\eqref{eqn:dtpi_v} with the proposed choice of pre-conditioner, we conduct two numerical simulations. First, we consider the problem from~\cite[Section VI]{yi2020exponential} with $m=5$ agents. Each $f_i$ is non-convex and satisfies the RSI condition (see~\cite{yi2020exponential}). Next, we consider the binary classification problem between the digits one and five using the MNIST training dataset. We consider the logistic regression model and conduct experiments to minimize the cross-entropy error on the training data. The data points are distributed equally among $m=5$ agents. The cumulative cost function does not satisfy Assumption~\ref{assump_4} for this problem, and there are multiple solutions of~\eqref{eqn:opt_1}. In both problems, the communication topology is a ring. For a fair comparison, the parameters in each algorithm are tuned such that the respective algorithm converges in fewer iterations. $\bx_i(0)$ in all the algorithms are the same, and each entry is chosen from the Normal distribution with zero mean and $0.1$ standard deviation. $\bv_i(0)$ in the proposed algorithm and PI consensus algorithm are chosen similarly, and $\bv_i(0)$ for the PI algorithm is according to~\cite{yi2020exponential}. Since $F$ is not strongly convex, the APM-C, Mudag, DAccGD, and ACC-SONATA algorithms are not applicable in both the problems, as their implementation requires the positive value of the strong-convexity coefficient. 

From Figure~\ref{fig:sc} and Figure~\ref{fig:mnist}, the PI consensus algorithm with the proposed $K_i$ converges much faster than the other algorithms. In the first example, the condition number of the Hessian of $F$ is of order $10$. In contrast, in the second example, the ratio between the largest and the smallest non-zero singular value of the Hessian of $F$ is of order $10^{11}$. Thus, even for ill-conditioned problems, the proposed algorithm can converge significantly faster than the other algorithms. Further, from Figure~\ref{fig:sc}, the PI consensus algorithm converges linearly under restricted secant inequality even though none of the local cost functions is convex, as proved in Theorem~\ref{thm:thm3}.

In~\eqref{eqn:dtpi_comb}, the effect of {\em connectivity} on the algorithm is represented by $\underline{\lambda}_{\min} (h \beta K \L)$. In the first problem, this value is $0.98 \underline{\lambda}_L$. Whereas, for PI consensus without preconditioning~\cite{wang2010control}, this value is $0.1 \underline{\lambda}_L$. Thus, introducing $K$ increases the ``effective connectivity'' of $\G$, which helps in faster consensus of the proposed algorithm (Fig.~\ref{fig:sc}(c)).

\begin{figure*}[htb!]
\centering
\begin{adjustbox}{minipage=\linewidth,scale=0.8}
\begin{subfigure}{.33\textwidth}
  \begin{center}
  \includegraphics[width = \textwidth]{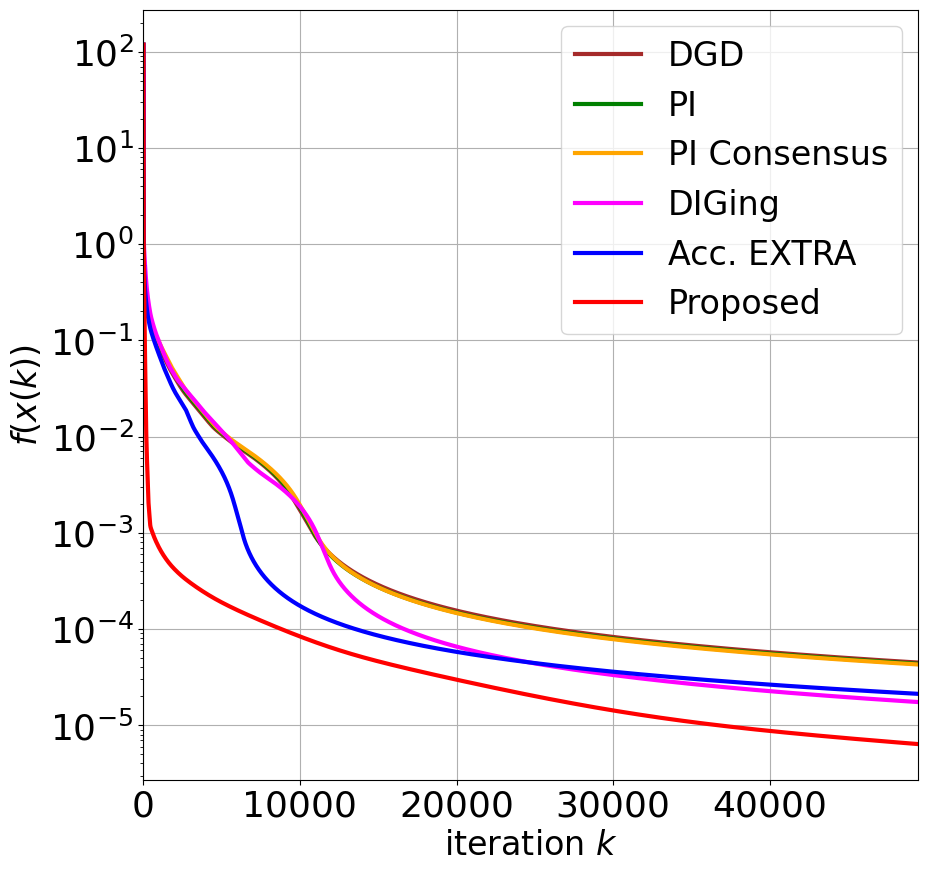}
  \caption{aggregate cost}
  \end{center}
\end{subfigure}%
\hfill
\begin{subfigure}{.33\textwidth}
  \begin{center}
  \includegraphics[width = \textwidth]{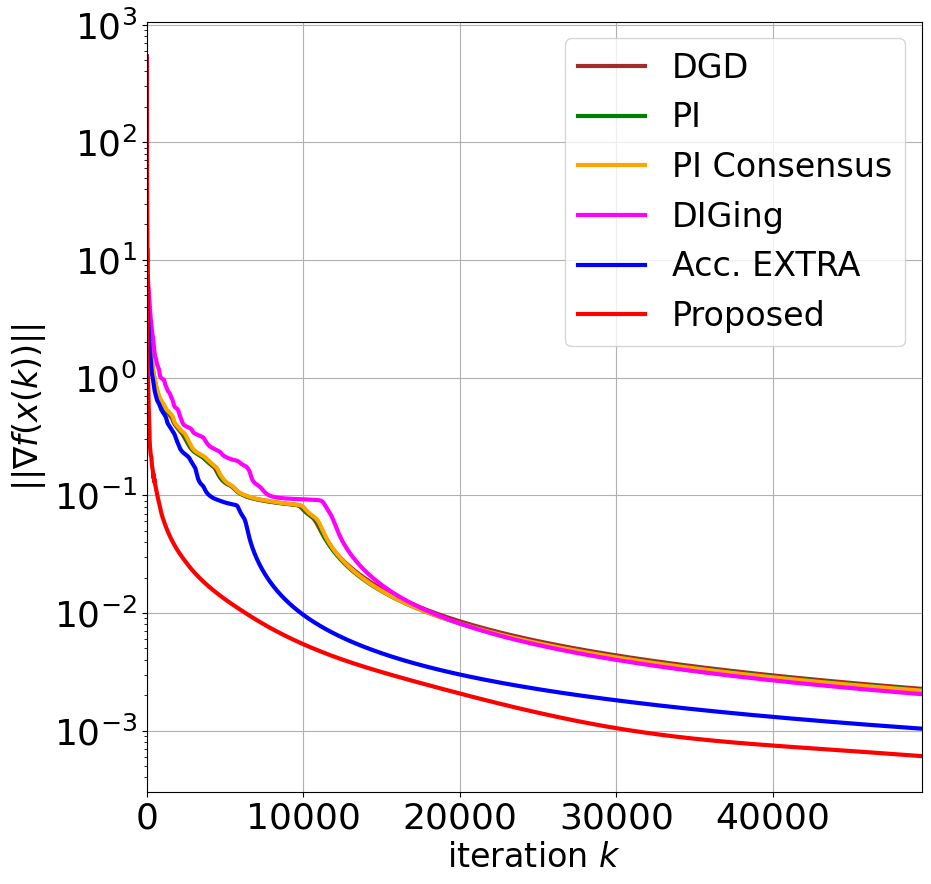}
  \caption{norm of gradient of aggregate cost}
  \end{center}
\end{subfigure}%
\hfill
\begin{subfigure}{.33\textwidth}
  \begin{center}
  \includegraphics[width = \textwidth]{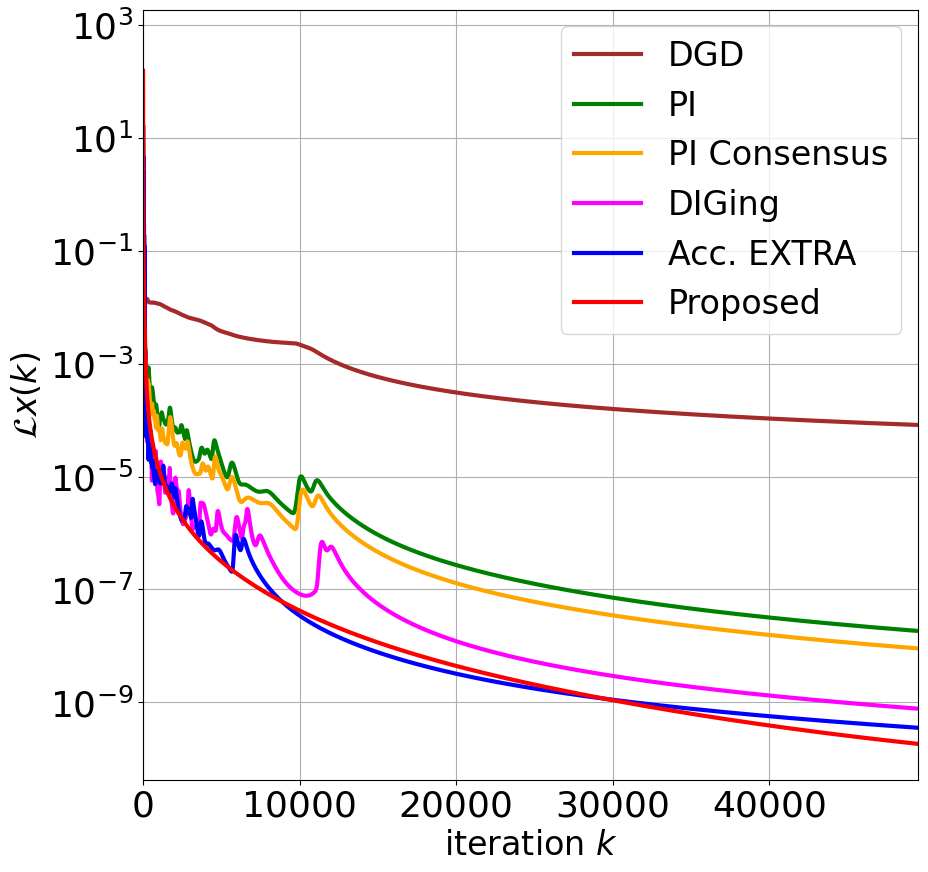}
  \caption{consensus term}
  \end{center}
\end{subfigure}%
\end{adjustbox}
\caption{\it Comparison of different distributed optimization algorithms for solving binary logistic regression on MNIST dataset.}
\label{fig:mnist}
\vspace{-1em}
\end{figure*}
\vspace{-0.2em}
\section{Conclusion}
For the first time, we presented convergence guarantee at an exponential rate of the PI consensus algorithm for cumulative cost functions satisfying smoothness and the RSI condition, without requiring convexity. Using Lyapunov stability theory, we proved the convergence of this algorithm in the continuous-time domain and obtained its rate-matching discretization. Further, we introduced a local pre-conditioning technique, locally computed by each agent, to accelerate the PI consensus algorithm. We demonstrated the efficacy of the proposed pre-conditioning compared to the existing distributed optimization algorithms, especially for ill-conditioned problems. 
% In future, we will investigate whether the proposed pre-conditioning can provably improve the convergence rate of standard PI consensus algorithm.

% \addtolength{\textheight}{-12cm}   % This command serves to balance the column lengths
%                                   % on the last page of the document manually. It shortens
%                                   % the textheight of the last page by a suitable amount.
%                                   % This command does not take effect until the next page
%                                   % so it should come on the page before the last. Make
%                                   % sure that you do not shorten the textheight too much.

\vspace{-0.2em}
\bibliographystyle{unsrt}        
\bibliography{refs} 

\addtolength{\textheight}{-12cm}

\end{document}